% 95intersect.tex
% WH: Measurability of Intersections of Measurable Multifunctions
% 1994-06-28 13:32 (Gunnar Taraldsen)
% 1995-03-30 14:55 (GT)

% Notes: 112

\documentstyle[a4,twoside]{article}

\pagestyle{myheadings}

\def\@begintheorem#1#2{\it \theoremhook \trivlist \item[\hskip \labelsep{\bf #1\ #2}]}
\def\@opargbegintheorem#1#2#3{\it \theoremhook \trivlist
      \item[\hskip \labelsep{\bf #1\ #2\ (#3)}]}
\def\theoremhook{}

% My theorems
\newtheorem{theo}{Theorem}[section]

\newtheorem{lemma}[theo]{Lemma}

% My environments
\newenvironment{proof}{{\it Proof.}}%
        {\ifvmode\else\unskip\fi ~\penalty10000 \hfill%
        $\Box$\vspace{0ex}}

% commands
\newcommand{\CompN}{{\mbox{$\bf C$}}}
\newcommand{\dlik}{\mathop{:=}\nolimits}
\newcommand{\Gr}{\mathop{Gr}}
\newcommand{\into}{\mbox{$\: \rightarrow \:$}}
\newcommand{\Prj}{\mathop{Pr}}
\newcommand{\st}{\mid}
\newcommand{\Sp}{\mathop{Sp}\nolimits}

\begin{document}

% title and keywords
\title{\raisebox{9ex}[1ex][.75ex]{\makebox[0ex][l]{
\hspace{-10ex}\footnotesize $
\stackrel{\mbox{{\it preprint Mathematics} No. 3/1995, 1-10}}
{\mbox{Dep. of Mathematics, Univ. of Tr.heim}}$}}
Measurability of Intersections of Measurable Multifunctions}
\author{Gunnar Taraldsen}
\date{} % empty date!

\maketitle

% abstract
\vspace{-7ex}
\begin{quotation}
We prove universal compact-measurability of the intersection
of a compact-measurable Souslin family of 
closed-valued multifunctions.
This generalizes previous results
on intersections of measurable multifunctions.
We introduce the unique maximal part of a multifunction
which is defined on the quotient given by an equivalence relation.
Measurability of this part of a multifunction
is proven in a special case.
We show how these results apply to the spectral theory of
measurable families of closed linear operators.
\footnote{AMS Subject Classification (1980):
28A20,
 60D05, 
54C80,
54D,
35J10.}
\footnote{Keywords: Measurable multifunction,
closed random set,
intersection of multifunctions,
countably separated spaces,
Schr{\"o}dinger operator.}
\footnote{Published:
  Taraldsen, G. Rend. Circ. Mat. Palermo (1996) 45: 459. https://doi.org/10.1007/BF02844516}
\end{quotation}

% headings
\addtolength{\baselineskip}{0.5\baselineskip}
\markboth{\hfill {\it Gunnar Taraldsen} \hfill}{
\hfill {\it Intersection of Multifunctions} \hfill}

\section{Introduction.}

If 
$\omega \mapsto F (\omega)$ and 
$\omega \mapsto G (\omega)$
are set-valued functions ($\equiv$ multifunctions),
then it is reasonable to consider properties of
the multifunctions 
$F \cap G$ and $F \cup G$.
$F$ is compact-measurable
if $\{\omega \st F (\omega) \cap K = \emptyset \}$
is measurable for all compacts $K$.
Borel-, closed-, and open-measurability are defined likewise.
It is trivial to verify measurability of 
finite or countable unions of measurable multifunctions.
Himmelberg {\sl et al} \cite[p.166,ex.2]{HPV} give an example
of two closed-valued open-measurable multifunctions into a Polish space
for which the intersection is not open-measurable.
On the other side they prove compact-measurability of
the intersection of a countable family of closed-valued 
compact-measurable multifunctions into a metric space.
A metric space has second countable compacts.
We prove  compact-measurability of
the intersection of a countable family of closed-valued 
compact-measurable multifunctions into a Hausdorff space with second countable
compacts.
The idea of our proof is different from the 
proof of Himmelberg  {\sl et al} and was motivated by the separation 
ideas in the work of Spakowski  {\sl et al} \cite{SPAKOWSKI}.
Spakowski  {\sl et al} prove open-measurability of
the intersection of a countable family of compact-valued 
open-measurable multifunctions into a countably $C$-separated 
Hausdorff space.
We generalize this to the case of  
a countably $C$-separated 
$T_1$-space,
with a simplification of their proof.
We remark that a space is $C$-separated and $T_1$ if and only if
every compact set is closed.

The counterexample of Himmelberg {\sl et al}  
disappears when the $\sigma$-algebra is completed.
Castaing  {\sl et al} \cite{CASTAING} prove
universal  Borel-measurability of
the intersection of a countable family of closed-valued 
universally open-measurable multifunctions into a Souslin space.
Our main result on intersections is
the Souslin intersection theorem which gives
universal compact-measurability of the intersection of 
a compact-measurable Souslin family of closed-valued multifunctions
into a  Hausdorff space with second countable
compacts.

In the final section we consider multifunctions of the form
\[
H (\omega) \dlik \bigcup_{b \in G (\omega)} F (\omega, b)
\]
\[
K (\omega) \dlik \bigcap_{b \in G (\omega)}  F (\omega, b).
\]
Universal open-measurability of $H$ is proven,
but the corresponding proof for $K$ fails.
Of particular interest is the case
\[
F_\sim (\omega) \dlik \bigcap_{b \sim \omega}  F (b),
\]
where $\sim$ is an equivalence relation.
$F_\sim$ is the unique maximal part of $F$ which is defined on the
quotient given by $\sim$.
We apply the Souslin intersection theorem to prove
measurability of $F_\sim$ in a special case.

To motivate the reader further we give here an example of 
one of the many possible applications of the theory of
measurable multifunctions,
and in particular the Souslin intersection theorem.
Consider the Schr{\"o}dinger operator
\[
H_{x} = - \Delta + W_0 + \sum_{i = 1}^\infty x_i W_i, 
\]
where $\Delta$ is the Laplace differential operator,
the $W_i$'s are functions acting by multiplication,  
and $x$ is a sequence of numbers.
Under suitable assumptions $H_{x}$ is realized as
a closed linear operator acting in the Hilbert space of
square integrable functions \cite{PASTUR}.
$H_x$ is closed iff the graph
$G (H_x) \dlik \{(f, H_x f) \st f \in D (H_x) \}$ is
a closed set.
If the multifunction $x \mapsto G (H_x)$ is open-measurable,
we say that
$\{H_x \}$ is a measurable family of closed operators.
In this case it turns out that the spectrum of the operator
gives a closed-valued universally open-measurable
multifunction $x \mapsto \Sigma (x) \dlik \sigma (H_x)$.
$\Sigma$ is typically even lower semicontinuous
in the special case of the above  Schr{\"o}dinger operator.
It is of particular importance to understand the dependence
of the spectrum on the behavior of the sequence $x$ at infinity.
One possibility is to consider two sequences $x,y$ to be equal at
infinity $x \sim y$ if there exists an $N$ such that
for all $n \ge N$, $x_n = y_n$.
The interesting part of the spectrum is then the tail part
$\Sigma_\sim$, which is 
the unique maximal part of the spectrum defined on the
quotient given by $\sim$.
The Souslin intersection theorem gives open-measurability
of the tail part of the spectrum.
Equipp the set of sequences with the product measure $\mu = \otimes \mu_i$,
where the $\mu_i$'s are probability measures.
We prove existence of a closed set $A$ such that
$\mu \{x \st \Sigma_\sim (x) = A \} = 1$.

\section{Preliminaries.}

Let ${\cal E}$ be a $\sigma$-algebra of sets in $\Omega$,
so ${\cal E}$ is a family of subsets of  $\Omega$ -
closed under complements and countable unions.
The empty set is countable, 
so $\emptyset \in {\cal E}$.
$\Omega$ equipped with ${\cal E}$
is a measurable space.  
Let $\mu : {\cal E} \into [0,\infty)$ be a finite measure,
which means that the  measure of the union of a disjoint countable
family of measurable sets equals the sum of the measure of each set.
In particular $\mu (\emptyset) = 0$, 
since an empty sum is zero.
A measure space is a set equipped with a measure.
${\cal E}_\mu$ is the family of sets $B$ such that there exists
$A,C \in {\cal E}$ with $A \subset B \subset C$ and $\mu (C \setminus A) = 0$. 
The completion ${\cal E}_\mu$ of  ${\cal E}$ is a $\sigma$-algebra,
and $\mu$ extends to  ${\cal E}_\mu$.
The universal completion is 
$\hat {\cal E} \dlik \bigcap_{\nu}{\cal E}_\nu$,
for all positive finite measures $\nu$.
The sets in  ${\cal E}$, $\hat {\cal E}$, and  ${\cal E}_\mu$ are
denoted measurable, universally measurable, and $\mu$-measurable respectively.
Observe  ${\cal E} \subset \hat {\cal E} \subset {\cal E}_\mu$,
and in particular $\hat {{\cal E}_\mu} = {\cal E}_\mu$.
The $\sigma$-algebra generated by a family of subsets in $\Omega$
is the intersection of all  $\sigma$-algebras containing the family.
The $\sigma$-algebra $\otimes_{\alpha \in \Lambda} {\cal E}_\alpha$  is
the  $\sigma$-algebra generated by
$\{ \{\omega \in \prod_\alpha \Omega_\alpha \st 
\omega (\beta) \in A_\beta \} \st 
\beta \in \Lambda,\; A_\beta \in {\cal E}_\beta \}$.
$\otimes_{\alpha} {\cal E}_\alpha$ is the product $\sigma$-algebra in
$\prod_\alpha \Omega_\alpha$.

Let $X$ be a topological space.
This means that there is a family $\tau$ of subsets of $X$ -
closed under arbitrary unions and finite intersections.
The empty union and intersection imply in particular 
$\emptyset,X \in \tau$.
The sets in  $\tau$ are called open and the complement of an open
set is called closed.
$X$ is Hausdorff ($T_2$) if for any given $x \neq y$
there exist open disjoint sets $U,V$ with $x \in U, y \in V$.
$X$ is a $T_1$-space if every finite set is a closed set.
A family $\{A_\alpha \}$ of sets covers a set $K$ if
$K \subset \bigcup_\alpha A_\alpha$.
A set $K$ is compact if every family 
$\{U_\iota \}$ of open
sets which covers $K$ contains a finite family 
$\{U_{\iota_1}, \ldots, U_{\iota_N} \}$
which covers $K$.
A finite set is compact,
and in a Hausdorff space a compact set is closed.
$\tau$ is $C$-separating \cite{SPAKOWSKI} if for any given compact set
$K$ and a disjoint closed set $F$ there exists an open set
containing $F$,
 but not intersecting $K$.
A space is $T_1$ and $C$-separated if and only if
every compact set is closed.
$X$ is separable if there exists
a countable set $D \subset X$ such that 
$D \bigcap U \neq \emptyset$ for all open sets $D \neq \emptyset$.
A topology $\tau$ is second countable if there exists a
countable $\tau_0 \subset \tau$ such that any
open set is a union of sets from $\tau_0$. 
$X$ is countably $C$-separated if there exists a countable
$C$-separating  family of open sets.
In this case every open set is a countable union of closed sets.
If $K$ is a subset of $X$, 
then $\tau_K \dlik \{K \bigcap U \st U \in \tau \}$ is the relative topology
in $K$.
$X$ has second countable compacts
if $\tau_K$ is second countable whenever $K$ is compact.
The family ${\cal B}$ of Borel sets is the  $\sigma$-algebra
generated by the topology.

A metric $d$ on a set $S$ is a function
$d: S^2 \into [0,\infty)$ such that for all $x,y,z$:
$d(x,y) = 0\; \Leftrightarrow\; x = y$, 
$d(x,y) = d(y,x)$, and 
$d(x,z) \le d(x,y) + d(y,z)$.
The set $\{y \st d(x,y) < r \}$ is the open ball with center 
at $x$ and radius $r$. 
The family of sets which are unions of open balls is a topology.
A sequence $x_1, x_2, \ldots$ is convergent if there exists an
$x$ such that $d(x,x_1), d(x,x_2), \ldots$ converges to $0$.
The sequence is Cauchy if $d(x_n,x_m)$ converges to $0$ as
$n,m \rightarrow \infty$.
A metric space is complete if every Cauchy sequence is convergent.
A Polish space is a separable, complete metric space.
A Souslin space $S$ is a metric space which is the range
of a continuous function with
a Polish domain \cite{BOURBAKI.III}.
Let $A \in {\cal E} \otimes {\cal B}$, where 
${\cal E}$ is a $\sigma$-algebra in $\Omega$ and  ${\cal B}$
is the family of Borel sets in a Souslin space $S$.
The projection
\[
\Pr_\Omega A \dlik \{\omega \st (\omega,x) \in A \}
\]
need not be measurable,
but it is universally measurable by 
the Souslin projection theorem \cite[p.75, Thm. III.23]{CASTAING}.

A multifunction $F: \Omega \into X$ is a function
$F: \Omega \into {\cal P} (X)$.
This means that for each $\omega \in \Omega$ there is assigned
a subset $F (\omega)$ of $X$.
Let $U$ be a subset of $X$.
The set of $\omega$ such that $F (\omega)$ intersects $U$ is denoted by
\[
F^{-1} (U) \dlik \{\omega \st F (\omega) \cap U \neq \emptyset \} .
\]
In particular $F^{-1}$ is a multifunction 
$F^{-1} : {\cal P} (X) \into \Omega$.
If $f: \Omega \into X$ is a function,
then $\{f \} : \Omega \into X$ is the singelton-valued multifunction
$\omega \mapsto \{f (\omega) \}$.
We observe $f^{-1} = \{f\}^{-1}$,
so statements about a function $f$ which only involves $f^{-1}$
may have natural generalizations to multifunctions.
$F^{-1}$ preserves unions
\[
F^{-1} (\bigcup_\lambda A_\lambda) = \bigcup_\lambda F^{-1} (A_\lambda),
\]
but not intersections, since
\[
F^{-1} (A \bigcap B) \subset F^{-1} (A) \bigcap F^{-1} (B),
\]
and the opposite inclusion is false.
${f}^{-1}$ preserves unions,
the empty set, 
the full set,
intersections, 
complements,
and disjoint unions.
Only the first two of these six statements generalise
to $F^{-1}$.
The graph of $F$ is 
$\Gr F \dlik \{(\omega,x) \st x \in F(\omega) \}$.
$\Gr$ is
a bijection between
multifunctions from $\Omega$ to $X$
and subsets of $\Omega \times X$.
$\Gr$ preserves unions,
the empty set, 
the full set,
intersections, 
complements,
and disjoint unions.

Let $\Omega$ be a measurable space, 
let $X$ be a topological space,
and let $F: \Omega \into X$ be a multifunction.
$F$ is (universally) open-measurable if $F^{-1} (U)$ is 
(universally) measurable for all open sets $U$.
Borel-, closed-, compact-measurable is defined likewise.
If $F$ is singelton-valued,
then Borel-, closed-, and open-measurability
are equivalent. 
If $X$ is a metric space and $F$ is closed-valued, then
Borel- implies closed- implies open- implies compact-measurability,
and furthermore
\[
\Gr F = \{(\omega,x) \st d(x,F (\omega)) = 0 \},
\]
so $F$ has a measurable graph if 
$F$ is open-measurable.
If $X$ is Hausdorff and a countable union of compacts,
then compact- implies closed-measurability.
If $F$ is open-measurable and non-empty closed-valued into a
Polish space, then
\[
F (\omega) = \{x_1 (\omega), x_2 (\omega), \ldots \}^-
\]
for suitable measurable $x_i$'s.
This is the selection theorem \cite[p.65, Theorem III.6]{CASTAING}.
If $F$ has a measurable graph,
then $F$ is universally Borel-measurable.
This follows from the identity
\[
F^{-1} (A) = \Pr_\Omega [\Omega \times A \bigcap \Gr F],
\]
and the Souslin projection theorem.
A family $\{F_s \}_{s \in S}$
of multifunctions is compact-measurable
if the multifunction
$(\omega, s) \mapsto F_{s} (\omega)$ is compact-measurable.

\section{Intersection of Multifunctions.}

In this section we will prove the 
compact-measurability
results on intersection
of compact-measurable multifunctions.
Some examples and consequences are discussed in the final section.

Since the idea of countable separation
as found in \cite{SPAKOWSKI} is essential in the following
we include a proof of:

\begin{lemma}
In a second countable Hausdorff space there exists a
countable basis which Hausdorff separates the compacts.
\end{lemma}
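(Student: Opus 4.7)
The plan is to take any countable basis $\mathcal{B}_0 = \{W_n\}_{n\in\mathbb{N}}$ (available by second countability) and enlarge it to the still-countable family
\[
\mathcal{B} \dlik \Bigl\{ W_{n_1} \cup \cdots \cup W_{n_k} \st k \ge 1,\; n_1,\ldots,n_k \in \mathbb{N} \Bigr\}
\]
of all finite unions of $W_n$'s. Since $\mathcal{B} \supset \mathcal{B}_0$, it is still a basis, and it is manifestly countable. What remains is to verify that for any two disjoint compacts $K_1, K_2$ there exist disjoint members $B_1, B_2 \in \mathcal{B}$ with $K_i \subset B_i$; this is how I interpret ``Hausdorff separates the compacts''.

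First I would invoke the standard Hausdorff separation of disjoint compacts by disjoint opens. Fix $x \in K_1$ and, for each $y \in K_2$, pick disjoint opens $U_{x,y} \ni x$, $V_{x,y} \ni y$; finitely many $V_{x,y_1}, \ldots, V_{x,y_m}$ cover $K_2$, so $U_x \dlik \bigcap_i U_{x,y_i}$ and $V_x \dlik \bigcup_i V_{x,y_i}$ are disjoint opens with $x \in U_x$ and $K_2 \subset V_x$. Running the same argument on the cover $\{U_x\}_{x \in K_1}$ of $K_1$ produces disjoint opens $U \supset K_1$ and $V \supset K_2$.

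Then I would approximate $U$ and $V$ from inside by members of $\mathcal{B}$. Every point of $K_1$ lies in some $W_n \subset U$ (because $\mathcal{B}_0$ is a basis), so compactness of $K_1$ extracts a finite subcover whose union $B_1$ belongs to $\mathcal{B}$ and satisfies $K_1 \subset B_1 \subset U$; an analogous construction on $K_2$ produces $B_2 \in \mathcal{B}$ with $K_2 \subset B_2 \subset V$, and disjointness $B_1 \cap B_2 = \emptyset$ is inherited from $U \cap V = \emptyset$. Nothing here is deep: the only slightly intricate ingredient is the double-compactness argument in step one, and the crucial bookkeeping move is closing $\mathcal{B}_0$ under finite unions so that a single member of $\mathcal{B}$ can swallow a whole compact set.
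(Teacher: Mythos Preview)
Your argument is correct and follows the same route as the paper: close a countable basis under finite unions, separate the two compacts by disjoint opens, then shrink each open to a finite union of basic sets using compactness. The only difference is cosmetic --- you spell out the double-compactness argument for separating disjoint compacts by disjoint opens, whereas the paper simply invokes this as a known consequence of the Hausdorff property.
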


\begin{proof}
The family $\cal B$ of finite unions of sets from a countable
basis is a countable separant for the compacts:
Let $M$ and $N$ be disjoint compact sets.
The Hausdorff property gives disjoint open sets $U'$ and $V'$
with $M \subset U'$ and $N \subset V'$.
$U'$ is a union of sets from the basis.
This gives a covering of $M$ and 
the compactness of $M$ gives a finite subcovering.
The union $U$ of the sets in the finite subcovering belongs to
$\cal B$.
A set $V$  in ${\cal B}$ with the property $N \subset V \subset V'$ is
found likewise.
The conclusion is the existence of two disjoint open sets $U$ and $V$ in
the basis $\cal B$ with $M \subset U$ and $N \subset V$.
\end{proof}

The proof of the following Lemma is a modification of the separation ideas
found in \cite{SPAKOWSKI} where open-measurability of
the intersection of two compact-valued open-measurable multifunctions
into a countably $C$-separated space is proven.
A separable metric space is countably $C$-separated,
but a non-separable metric space is not countably $C$-separated
since a  countably $C$-separated space is separable.
The following holds in particular for multifunctions
into a metric space.

\begin{lemma}
The intersection $F \cap G$ of two closed-valued
compact-measurable multifunctions
into a Hausdorff space with second countable compacts
is a compact-measurable multifunction.
\end{lemma}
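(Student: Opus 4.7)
The plan is to reduce compact-measurability of $F \cap G$ to a countable union of sets that are measurable by the compact-measurability of $F$ and $G$ separately, using the separating basis supplied by Lemma~3.1.

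Fix a compact set $K \subset X$. I need to show that $\{\omega \st F(\omega) \cap G(\omega) \cap K \neq \emptyset\}$ is measurable, or equivalently that its complement $E \dlik \{\omega \st F(\omega) \cap G(\omega) \cap K = \emptyset\}$ is measurable. The key observation is that $K$ itself, with its relative topology $\tau_K$, is a second countable Hausdorff space by hypothesis. Moreover, for each $\omega$ the sets $F(\omega) \cap K$ and $G(\omega) \cap K$ are closed in $K$ (since $F(\omega),G(\omega)$ are closed in $X$), hence compact subsets of the compact Hausdorff space $K$.

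Applying Lemma~3.1 to $K$, I obtain a countable basis $\cal B$ of $\tau_K$ which Hausdorff-separates compact subsets of $K$. Thus two disjoint compact subsets $M,N$ of $K$ can always be enclosed in disjoint members of $\cal B$. Therefore
\[
E \;=\; \bigcup_{\substack{U,V \in {\cal B}\\ U \cap V = \emptyset}}
\bigl\{\omega \st F(\omega)\cap K \subset U\bigr\}
\bigcap
\bigl\{\omega \st G(\omega)\cap K \subset V\bigr\},
\]
a countable union. Each $U \in {\cal B}$ is open in $K$, so $K \setminus U$ is closed in the compact set $K$, hence compact in $X$. The containment $F(\omega) \cap K \subset U$ is equivalent to $F(\omega) \cap (K \setminus U) = \emptyset$, and the set of such $\omega$ is the complement of $F^{-1}(K \setminus U)$, which is measurable by the compact-measurability of $F$. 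The same holds for $G$ and $V$. Intersecting and unioning yields measurability of $E$.

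The step I expect to be slightly delicate is the correct application of Lemma~3.1: one must apply it to $K$ (not to $X$, which need not be second countable), verify that $F(\omega)\cap K$ and $G(\omega)\cap K$ are genuinely compact so the separation property is applicable, and note carefully that a basis element $U \in {\cal B} \subset \tau_K$ has compact complement in $K$ so that $K \setminus U$ is a legitimate test set for compact-measurability. Once these points are checked, the rest is a bookkeeping of countable Boolean operations.
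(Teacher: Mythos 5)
Your proof is correct and follows essentially the same route as the paper: fix a compact $K$, apply Lemma~3.1 to $K$ with its relative topology to get a countable separating basis, and rewrite the avoidance set as a countable union over disjoint basis pairs $(U,V)$ of conditions of the form $F(\omega)\cap(K\setminus U)=\emptyset$, each measurable because $K\setminus U$ is compact. The only difference is cosmetic notation ($F(\omega)\cap K\subset U$ versus the paper's $F(\omega)\cap K\cap U^c=\emptyset$).
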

\begin{proof} Let $K$ be compact.
The previous Lemma gives a countable family $\cal B$ of open
sets which Hausdorff  separates the compacts in $K$.
If
$(U,V) \in {\cal B}' \Leftrightarrow
 [U,V \in {\cal B}, U \cap V = \emptyset]$,
then
\[
\{\omega \st F (\omega) \cap G (\omega) \cap K = \emptyset \} =
\bigcup_{(U,V) \in {\cal B}'}
\{\omega \st F (\omega) \cap K \cap U^c = \emptyset\; \mbox{and} \;
  G (\omega) \cap K \cap V^c = \emptyset \}.
\]
This proves compact-measurability of the multifunction 
$F \cap G$ since  
$K \cap U^c$ and $K \cap V^c$ are compact sets.
\end{proof}

The next Theorem generalizes from finite intersections of
multifunctions
to countable intersections.
Theorem 3.4 in
\cite{HPV} gives the same result in the case of multifunctions into a
metric space $X$.
Our proof is different and gives a more general result since
a metric space has second countable compacts,
but the converse is false.

\begin{theo}
The intersection of a sequence of closed-valued
compact-measurable multifunctions
into a Hausdorff space with second countable compacts
is a compact-measurable multifunction.
\end{theo}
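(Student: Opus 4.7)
The plan is to bootstrap the two-multifunction case from the previous Lemma into the countable case by forming finite partial intersections, and then exploit the finite intersection property of compactness to convert a countable intersection test into a countable union of measurable events.

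First I would set $G_n \dlik F_1 \cap F_2 \cap \cdots \cap F_n$. Since each $F_i$ is closed-valued, so is $G_n$, and by induction on the previous Lemma (applied at each step to the closed-valued compact-measurable pair $G_{n-1}, F_n$), each $G_n$ is compact-measurable. Clearly $\bigcap_{n=1}^\infty F_n(\omega) = \bigcap_{n=1}^\infty G_n(\omega)$, and the sequence $\{G_n(\omega)\}$ is decreasing.

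Next I would fix a compact set $K$ and reduce $\{\omega \st \bigcap_n F_n(\omega) \cap K = \emptyset\}$ to something manifestly measurable. The key observation is that in a Hausdorff space each closed set $G_n(\omega)$ meets $K$ in a closed subset of $K$, hence a compact set, and these form a decreasing family. By the finite intersection property for compact sets, a decreasing family of nonempty compact subsets of $K$ has nonempty intersection. Contrapositively,
\[
\bigcap_{n=1}^\infty G_n(\omega) \cap K = \emptyset
\quad \Longleftrightarrow \quad
\exists n: G_n(\omega) \cap K = \emptyset.
\]
Therefore
\[
\{\omega \st \textstyle\bigcap_n F_n(\omega) \cap K = \emptyset\}
 = \bigcup_{n=1}^\infty \{\omega \st G_n(\omega) \cap K = \emptyset\},
\]
a countable union of measurable sets by compact-measurability of the $G_n$'s. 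Taking complements gives measurability of $(\bigcap_n F_n)^{-1}(K)$, which is what we need.

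I do not expect any real obstacle here: the second-countable-compacts hypothesis enters only via the previous Lemma (to treat each finite stage), and the countable-to-finite reduction is driven entirely by Hausdorffness plus compactness of $K$. The one point to be careful about is that the finite intersection argument needs the sets $G_n(\omega) \cap K$ to be compact, for which we use precisely that $X$ is Hausdorff (so closed sets intersected with the compact $K$ are compact), rather than any stronger property of $X$ itself.
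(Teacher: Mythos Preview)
Your proof is correct and follows essentially the same route as the paper: reduce the countable intersection to finite partial intersections via the finite-intersection property of compacts, then invoke the previous Lemma inductively to handle each finite stage. One small overclaim worth noting is that the reduction step itself does not actually require Hausdorffness---each $F_n(\omega)\cap K$ is closed in $K$ simply because $F_n(\omega)$ is closed in $X$, and closed subsets of a compact space already satisfy the finite intersection property; the Hausdorff/second-countable-compacts hypothesis is used only inside the Lemma, exactly as the paper's terse proof suggests.
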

\begin{proof}
When $K$ is compact,
the set
\[
\{\omega \st [\bigcap_{n \ge 1} F_n (\omega)] \cap K = \emptyset \} = 
\bigcup_{m \ge 1}
\{\omega \st [\bigcap_{1 \le n \le m} F_{n} (\omega)] \cap K = \emptyset \}
\]
is measurable since a finite intersection of closed-valued
compact-measurable multifunctions is compact-measurable.
\end{proof}

Our main result on intersections is
the Souslin intersection Theorem:

\begin{theo}
Let $S$ be a Souslin space.
The intersection 
$\cap_{s \in S} F_s$ of a compact-measurable
family  $\{F_s \}_{s \in S}$ of closed-valued
multifunctions into a Hausdorff space with second countable compacts
is a universally compact-measurable multifunction.
\end{theo}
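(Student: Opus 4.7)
The plan is to reduce universal compact-measurability of $\bigcap_{s \in S} F_s$ to the Souslin projection theorem, applied to sets built from finite sub-intersections. Fix a compact $K \subset X$; the goal is to show that
\[
B \dlik \{\omega \st [\bigcap_{s \in S} F_s (\omega)] \cap K = \emptyset \}
\]
is universally measurable.

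First I would exploit the compactness of $K$ together with the closedness of each $F_s (\omega)$: the set $[\bigcap_s F_s (\omega)] \cap K$ is empty iff the open sets $\{X \setminus F_s (\omega) \}_{s \in S}$ cover $K$, iff (by compactness of $K$) there exist finitely many $s_1, \ldots , s_n \in S$ with $K \cap F_{s_1} (\omega) \cap \cdots \cap F_{s_n} (\omega) = \emptyset$. Consequently
\[
B = \bigcup_{n \ge 1} \Pr_\Omega C_n, \quad \mbox{where} \quad C_n \dlik \{(\omega, s_1, \ldots , s_n) \in \Omega \times S^n \st K \cap \bigcap_{i = 1}^n F_{s_i} (\omega) = \emptyset \}.
\]

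Next I would check that each $C_n$ lies in ${\cal E} \otimes {\cal B} (S^n)$. Define auxiliary multifunctions $\Phi_i : \Omega \times S^n \into X$ by $\Phi_i (\omega, s_1, \ldots , s_n) \dlik F_{s_i} (\omega)$. Each $\Phi_i$ is the composition of the parametric multifunction $(\omega, s) \mapsto F_s (\omega)$ (compact-measurable by hypothesis on the family) with the measurable coordinate projection $(\omega, s_1, \ldots , s_n) \mapsto (\omega, s_i)$, and is therefore closed-valued and compact-measurable. Lemma 3.2, extended by induction to finite intersections, shows $\bigcap_{i = 1}^n \Phi_i$ is compact-measurable; this is precisely the statement that $C_n = (\Omega \times S^n) \setminus (\bigcap_i \Phi_i)^{-1} (K)$ belongs to ${\cal E} \otimes {\cal B} (S^n)$.

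Finally, since finite products of Souslin spaces are Souslin, each $\Pr_\Omega C_n$ is universally measurable by the Souslin projection theorem quoted in the preliminaries, and so is the countable union $B$. The step that needs the most care is the upgrade from compact-measurability of the original family on $\Omega \times S$ to compact-measurability of the derived multifunctions $\Phi_i$ on $\Omega \times S^n$; once that is in place, the finite-subcover reduction and the Souslin projection theorem combine to finish the argument.
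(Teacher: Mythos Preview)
Your proposal is correct and follows essentially the same route as the paper: the compactness-of-$K$ reduction to finite subfamilies, the auxiliary multifunctions $\Phi_i(\omega,\vec{s}) = F_{s_i}(\omega)$ (the paper's $G_i$), compact-measurability of their finite intersection via Lemma~3.2, and then the Souslin projection theorem on each $S^n$. Your write-up is in fact slightly more explicit than the paper's about why the finite-subcover identity holds and why $\Phi_i$ inherits compact-measurability from the family.
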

\begin{proof}
If $K$ is compact and $F_s (\omega)$ is closed, then
\[
\{\omega \st [\cap_{s \in S} F_s (\omega)] \cap K = \emptyset \} = 
\bigcup_{n = 1}^\infty \bigcup_{\vec{s} \in S^n}
\{\omega \st [\cap_{i = 1}^n F_{s_i} (\omega)] \cap K = \emptyset \}.
\]
$S^n$ is a Souslin space,
so the above set is universally measurable from the
Souslin projection theorem and 
the compact-measurability of 
$(\omega,\vec{s}) \mapsto  [\cap_{i = 1}^n F_{s_i} (\omega)]$
which we proceed to prove.
Let
$G_i (\omega, \vec{s}) \dlik F_{s_i} (\omega)$,
so $G_i^{-1} (A) = \{(\omega, \vec{s}) \st 
F_{s_i}(\omega) \cap A \neq \emptyset  \}$.
The multifunction $G_i$ is compact-measurable
from the compact measurability of the family 
$\{F_s \}_{s \in S}$.
The multifunction $\cap_{i = 1}^n G_i$ is then also
compact-measurable.
\end{proof}

The following Theorem is a kind of dual to the previous.

\begin{theo}
A countable intersection of closed-measurable
compact-valued multifunctions into a second countable Hausdorff space
is closed-measurable.
Let $S$ be a Souslin space.
The intersection 
$\cap_{s \in S} F_s$ of a closed-measurable
family  $\{F_s \}_{s \in S}$ of compact-valued
multifunctions into a second countable Hausdorff space
is a universally closed-measurable multifunction.
\end{theo}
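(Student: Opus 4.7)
My plan is to mimic the pattern used for Theorem 3.3 and Theorem 3.4, but with the roles of compact and closed test sets interchanged and with a key appeal to the finite intersection property for families of closed subsets of a compact set. I expect the main obstacle to be the first step: adapting the compact-set separation argument of Lemma 3.2 to the case where the test set is merely closed. The crucial point that makes this work is that when $F$ is compact-valued the intersection $F(\omega) \cap C$ with any closed $C$ remains compact, so the Hausdorff separation of Lemma 3.1 still applies directly inside $X$.

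The dual of Lemma 3.2 would say that the intersection $F \cap G$ of two closed-measurable compact-valued multifunctions into a second countable Hausdorff space is closed-measurable. Given a closed set $C$, the sets $F(\omega) \cap C$ and $G(\omega) \cap C$ are compact in $X$ and are disjoint precisely when $F(\omega) \cap G(\omega) \cap C = \emptyset$. Lemma 3.1 supplies a countable family $\cal B$ of open sets in $X$ that Hausdorff separates the compacts of $X$, so with ${\cal B}' \dlik \{(U,V) \st U, V \in {\cal B}, U \cap V = \emptyset\}$ one obtains
\[
\{\omega \st F(\omega) \cap G(\omega) \cap C = \emptyset\} = \bigcup_{(U,V) \in {\cal B}'} \{\omega \st F(\omega) \cap (C \cap U^c) = \emptyset \;\mbox{and}\; G(\omega) \cap (C \cap V^c) = \emptyset\},
\]
and each set in this countable union is measurable because $C \cap U^c$ and $C \cap V^c$ are closed and $F, G$ are closed-measurable.

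For a sequence $\{F_n\}$, iterating this finite case makes each $\bigcap_{n=1}^N F_n$ a closed-measurable compact-valued multifunction. The finite intersection property, applied to the compact set $F_1(\omega) \cap C$ and its closed subsets $F_n(\omega) \cap C$, gives
\[
\{\omega \st [\bigcap_n F_n(\omega)] \cap C = \emptyset\} = \bigcup_{N \ge 1} \{\omega \st [\bigcap_{n=1}^N F_n(\omega)] \cap C = \emptyset\},
\]
which is measurable, completing the countable part.

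For the Souslin statement the same FIP argument yields
\[
\{\omega \st [\bigcap_{s \in S} F_s(\omega)] \cap C = \emptyset\} = \bigcup_{n \ge 1} \bigcup_{\vec{s} \in S^n} \{\omega \st [\bigcap_{i=1}^n F_{s_i}(\omega)] \cap C = \emptyset\}.
\]
Following the pattern of Theorem 3.4, closed-measurability of the family $\{F_s\}$ makes each $(\omega,\vec{s}) \mapsto F_{s_i}(\omega)$ closed-measurable on $\Omega \times S^n$, and iterating the two-multifunction case shows that $(\omega,\vec{s}) \mapsto [\bigcap_{i=1}^n F_{s_i}(\omega)] \cap C$ is closed-measurable and compact-valued. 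Testing against the closed set $X$ yields a product-measurable subset of $\Omega \times S^n$ whose projection on $\Omega$ is universally measurable by the Souslin projection theorem, and a countable union of universally measurable sets is universally measurable.
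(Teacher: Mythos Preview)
Your proof is correct and follows exactly the route the paper intends: it is the straightforward dual of Lemma~3.2, Theorem~3.3, and Theorem~3.4 obtained by interchanging the roles of ``closed'' and ``compact'' and applying Lemma~3.1 to the whole (now second countable) space $X$ rather than to the compact test set. The paper's own proof is simply ``This is similar to the above,'' and your write-up spells out precisely that similarity.
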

\begin{proof}
This is similar to the above.
\end{proof}

The following Theorem generalizes Theorem 3.7 in \cite{SPAKOWSKI}
since a compact-valued multifunction into 
a countably $C$-separated space is open-measurable
if and only if it is closed-measurable.
We remove the Hausdorff assumption,
and go beyond countable intersections.

\begin{theo}
A countable intersection of closed-measurable
compact-valued multifunctions into a countably $C$-separated $T_1$-space
is closed-measurable.
Let $S$ be a Souslin space.
The intersection 
$\cap_{s \in S} F_s$ of a closed-measurable
family  $\{F_s \}_{s \in S}$ of compact-valued
multifunctions into a countably $C$-separated $T_1$-space
is a universally closed-measurable multifunction.
\end{theo}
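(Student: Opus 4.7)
My approach is to mirror the three-step structure of Lemma 3.2, Theorem 3.3 and the Souslin intersection theorem, replacing the hypothesis ``Hausdorff space with second countable compacts'' by ``countably $C$-separated $T_1$-space''. Two structural facts of such spaces are what make the arguments go through: in a $C$-separated $T_1$-space every compact set is closed, so compact-valued implies closed-valued and finite intersections of compact-valued multifunctions remain compact-valued; and in a countably $C$-separated space every open set is a countable union of closed sets (a fact noted in the Preliminaries).

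The base case, playing the role of Lemma 3.2, is to show that the intersection $F \cap G$ of two closed-measurable compact-valued multifunctions is again closed-measurable. Fix a closed set $C$ and a countable $C$-separating family $\{U_n\}$ of open sets. Since $F(\omega) \cap C$ and $G(\omega) \cap C$ are compact and hence closed, applying $C$-separation pointwise gives
\[
\{\omega \st F(\omega) \cap G(\omega) \cap C = \emptyset\} = \bigcup_n \Bigl( [\Omega \setminus F^{-1}(C \cap U_n)] \cap [\Omega \setminus G^{-1}(C \cap U_n^c)] \Bigr).
\]
The set $C \cap U_n^c$ is closed, so the second factor is measurable by closed-measurability of $G$. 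For the first factor, decompose $U_n = \bigcup_k D_{n,k}$ into a countable union of closed sets; then $F^{-1}(C \cap U_n) = \bigcup_k F^{-1}(C \cap D_{n,k})$ is measurable by closed-measurability of $F$.

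Iterating the base case shows that any finite intersection of closed-measurable compact-valued multifunctions is closed-measurable. For a countable family $\{F_n\}$, the finite intersection property of compact sets yields
\[
\{\omega \st [\cap_n F_n(\omega)] \cap C = \emptyset\} = \bigcup_m \{\omega \st [\cap_{n=1}^m F_n(\omega)] \cap C = \emptyset\},
\]
which is measurable. For the Souslin case, set $G_i(\omega,\vec{s}) \dlik F_{s_i}(\omega)$; closed-measurability of the family $\{F_s\}$ gives closed-measurability of each $G_i$ on $\Omega \times S^n$, whence of $\cap_{i=1}^n G_i$ by the base case. Using the finite intersection property once more,
\[
\{\omega \st [\cap_{s \in S} F_s(\omega)] \cap C = \emptyset\} = \bigcup_{n=1}^\infty \bigcup_{\vec{s} \in S^n} \{\omega \st [\cap_{i=1}^n F_{s_i}(\omega)] \cap C = \emptyset\},
\]
and since $S^n$ is Souslin, universal measurability follows from the Souslin projection theorem exactly as in Theorem 3.4.

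The only nontrivial step is the base case, and specifically the appearance of the mixed set $C \cap U_n$ with $C$ closed and $U_n$ open: closed-measurability of $F$ directly controls $F^{-1}(A)$ only for closed $A$. This is precisely where the countably $C$-separated hypothesis earns its keep, by simultaneously supplying the separating family and guaranteeing that each $U_n$ decomposes into countably many closed pieces.
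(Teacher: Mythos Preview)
Your proof is correct and follows essentially the same three-step structure as the paper: a base case for two multifunctions via the countable $C$-separating family, the passage to countable intersections by compactness, and the Souslin case via projection. The only cosmetic difference is in the base-case decomposition: the paper separates $F_1(\omega)$ from $F_2(\omega)\cap G$ (giving the term $F_1^{-1}(U_i^c)$ without the extra intersection with $C$) and then invokes ``closed-measurable $\Rightarrow$ open-measurable'' for $F_2\cap G$, whereas you intersect both sides with $C$ first and decompose $U_n$ into closed pieces directly---these are the same argument in slightly different packaging.
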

\begin{proof}
Let $K$ be compact and let $\{U_1, U_2, \ldots\}$ be the 
$C$-separating family of open sets.
For each $x \in K^c$ there is an $i$ with $x \in U_i \subset K^c$,
since $\{x \}$ is closed by the $T_1$ assumption.
This proves that $K$ is closed.
Let $F_i$ be closed-measurable compact-valued multifunctions,
and let $G$ be a closed set.
$F_1 (\omega)$ is closed and $F_2 (\omega) \cap G$
is compact, so
\[
\{\omega \st F_1 (\omega) \cap F_2 (\omega) \cap G = \emptyset \} =
\bigcup_i [ \{\omega \st  F_1 (\omega) \cap U_i^c = \emptyset \}
\cap \{\omega \st  F_2 (\omega) \cap G \cap U_i = \emptyset \}].
\]
This set is measurable, because
$F_2 \cap G$ is closed-measurable and then also open-measurable.
$F_1 (\omega)$ is compact, so
\[
\{\omega \st [\bigcap_{n \ge 1} F_n (\omega)] \cap G = \emptyset \} = 
\bigcup_{m \ge 1}
\{\omega \st [\bigcap_{1 \le n \le m} F_{n} (\omega)] \cap G = \emptyset \},
\]
and the countable intersection result is proven.
The Souslin intersection result follows as before.
\end{proof}

\section{Examples.}

Let $X$ be the set of complex numbers equipped with the
usual topology $\tau$ and family ${\cal B}$ of Borel sets.
Equipp $E \dlik \prod_{i = 1}^\infty X$ with the
$\sigma$-algebra $\otimes_i {\cal B}$ and
the measure $\mu = \otimes_i \mu_i$ where $\mu_i (X) = 1$.

\subsection{Spectrum of a sequence.}

The spectrum is the multifunction
$\Sp : E \into X$ defined by 
\[
\Sp (x) \dlik \{x_1, x_2, \ldots \}^- .
\]
$\Sp$ is closed-valued and open-measurable from
\[
\Sp_i (x) \dlik \{x_i \},
\]
\[
\Sp = [\bigcup_i \Sp_i]^-,
\]
\[
\Sp^{-1} (U^{\mbox{open}}) \stackrel{(1)}{=} [\bigcup_i \Sp_i]^{-1} (U) 
\stackrel{(2)}{=}  \bigcup_i \Sp_i^{-1} (U),
\]
\[
\Sp_i^{-1} (U) = \{x \st x_i \in U \},
\]
since the sets $\{x \st x_i \in U \}$ generate the 
product $\sigma$-algebra.
The equality $\stackrel{(1)}{=}$ proves more generally that
a multifunction is open-measurable if and only if the closure
of the multifunction is open-measurable.
The equality $\stackrel{(2)}{=}$ does not depend on the countability of the
index set and gives more generally the 
measurability of the union of a measurable family of multifunctions.
The corresponding statement for intersections is false and this is
why measurability of the intersection of multifunctions is a more
delicate question.
The intersection relation corresponding to $\stackrel{(2)}{=}$
is
\[
[\bigcap_i F_i]^{-1} (A) \subset \bigcap_i F_i^{-1} (A) .
\]

\subsection{The selection theorem.}

Let $x = (x_1, x_2, \ldots)$ be a sequence of complex valued
measurable functions.
The above gives open-measurability of the multifunction
$\Sigma:\Omega \into X$
\[
\omega \mapsto \Sigma (\omega) \dlik \Sp (x (\omega)) .
\]
Each $x_i$ is a selection for $\Sigma$ since $x_i (\omega) \in \Sigma (\omega)$.
The sequence $x$ is a dense selection for $\Sigma$.

Let $\omega \mapsto F (\omega)$ be a closed non-empty valued open-measurable
multifunction into $X$.
$X = \CompN$ is Polish so the selection theorem gives a dense measurable selection 
$x$ and $F = \Sp (x)$.
The selection is not unique.

\subsection{The essential spectrum.}

The essential spectrum of the sequence $x$ is
\[
\Sp_{ess} (x) \dlik \bigcap_{n \ge 1} \Sp (x_n, x_{n + 1}, \ldots) . 
\]
It follows from the countable intersection theorem 
that $\Sp_{ess}$ is compact-measurable,
since each $x \mapsto \Sp (x_n, x_{n + 1}, \ldots)$ is compact-measurable.
$X = \CompN$ is a countable union of 
compacts,
and $\Sp_{ess}$ is then also open-measurable.

\subsection{The tail part of a multifunction}

Let $\sigma : E \into X$ be a closed-valued open-measurable
multifunction.
The tail dependent part of $\sigma$ is given by
\[
\sigma_{\sim} (x) \dlik \bigcap_{n \ge 1} 
\bigcap_{\vec{y} \in \CompN^{n - 1}} \sigma (\vec{y}, x_n, x_{n + 1}, \ldots ).
\]
$\sigma_{\sim}$ is then universally 
open-measurable from the Souslin intersection theorem.
This conclusion holds also if we only assume
that  $x \mapsto \sigma (x)$ is a closed-valued universally open-measurable
multifunction into $X$:
The set $Y$ of all closed sets in $X$ is equipped with
the $\sigma$-algebra ${\cal F}$ generated by sets
$\{A^{closed} \st A \cap U^{open} \neq \emptyset \}$.
This $\sigma$-algebra is countably generated.
Then there exists a closed-valued open-measurable
$\varsigma$ such that 
$\mu \{x \st \varsigma (x) \neq \sigma (x)  \} = 0$
\cite[p.103]{KECHRIS},
and  
$\mu \{x \st \varsigma_\sim (x) \neq \sigma_\sim (x)  \} = 0$ 
which gives the claim.

The $\sigma$-algebra ${\cal F}$ contains sets 
$Z_1, Z_2, \ldots$ which separates the points in $Y$. 
Kolmogorov's zero-one law \cite[p.104]{KECHRIS}  gives 
$\mu \{x \st \sigma_\sim (x) \in Z_i \} \in \{0, 1 \}$.
We conclude existence of a closed set $A \subset X$
such that 
$\mu \{x \st \sigma_\sim (x) = A \} = 1$.

\section{Multifunctions restricted to equivalence classes.}

Define the relation $\sim$
on $E$ by 
\[
x \sim y \;\;\; \Leftrightarrow \;\;\;
[\exists N\;\; \forall n \ge N\;\; x_n = y_n] .
\]
The relation is an equivalence relation since it is
transitive ($x \sim y, y \sim z \Rightarrow x \sim z $),
reflexive ($x \sim x$), and symmetric ($x \sim y \Rightarrow y \sim x$).
In the previous subsection we established the measurability of
the tail dependent part
\[
\sigma_{\sim} (x) = \bigcap_{y \sim x} \sigma (y)
\]
of $\sigma$.
In the following we will consider a generalization of this.

Let $\sim$ be an equivalence relation on a set $E$ and consider
a multifunction $F: E \into X$.
The $\sim$-part $F_{\sim}$ of $F$ is 
$F_{\sim} (x) = \bigcap_{y \sim x} F (y)$.
It follows that $F_{\sim}$ is
\begin{description}
\item[(i)  Included] $F_{\sim} (x) \subset F (x)$. 
\item[(ii) Invariant] $x \sim y \Rightarrow F_{\sim} (x) = F_{\sim} (y)$. 
\item[(iii) Maximal] If $G$ is included in $F$ and is $\sim$-invariant,
                     then $G \subset F_{\sim}$.
\end{description}
Conversly it follows that a multifunction $H$ which is
included, invariant, and maximal is unique, so
$H = F_{\sim}$.

The measurabilty of $F_{\sim}$ is a fundamental question.
As shown above the Souslin intersection theorem gives a positive result
for compact-measurability in a special case.
A negative result for open-measurabilty follows from the next subsection.

\subsection{A general union result.}

Let $G: \Omega \into E$ and $F: \Omega \times E \into X$ be
multifunctions.
Define the multifunction $H: \Omega \into X$ by
\[
H (\omega) \dlik \bigcup_{b \in G (\omega)} F (\omega, b).
\]
The following identity holds
\[
H^{-1} (U) = \Prj_{\Omega} \left[ 
\Omega \times E \times U \;\;\cap\;\;
\Gr G \times X \;\;\cap\;\;
\Gr F
\right] .
\]
Assume open-measurability of $F$ and $G$
and that $E$ and $X$ are Polish.
It follows that $H$ is universally open-measurable.

Consider next the multifunction
$K: \Omega \into X$ defined by
\[
K (\omega) \dlik \bigcap_{b \in G (\omega)}  F (\omega, b).
\]
A special case is given by $\Omega = E$,
$b \in G(\omega) \Leftrightarrow b \sim \omega$,
and $F (\omega, b) = F (\omega)$,
in which case $K = F_{\sim}$.
Measurability properties of $K$ follows from the identity
\[
\Gr K^c = \Prj_{\Omega \times X} \;\;
 \left[\Gr G \times X\;\; \cap\;\; \Gr F^c \right] .
\]
Assume in addition to the above that $\Omega$ is Polish and
is equipped with the corresponding Borel field.
It follows that $\Gr K$ is Borel if and only if
\[
\Prj_{\Omega \times X} \;\;
\Gr G \times X \;\;\cap\;\; \Gr F^c
\]
is Borel, 
since the complement of a Souslin set is Souslin 
if and only if it is Borel \cite[p.205]{BOURBAKI.III}.
This implies that open-measurability of $K$ fails without
further assumptions,
since there exist Borel sets which have non-Borel projections.
The question of compact-measurability of $K$ is left open.

{\small
%\bibliography{avh93}
%\bibliographystyle{plain}
% 95intersect.bbl

}

\hfill \begin{minipage}[t]{0ex}
\begin{tabbing}
Department of Mathematics and Statistics - AVH\\
University of Trondheim\\
N-7055 Dragvoll\\
NORWAY\\
\ \\ \ \\
gunnar@matstat.unit.no
\end{tabbing}
 \end{minipage}

\end{document}